\documentclass[12pt,a4paper]{amsart}
\usepackage{graphicx}
\usepackage{float,epsfig}
\usepackage[russian,english]{babel}
\usepackage[psamsfonts]{amssymb}
\usepackage{amsfonts}
\usepackage{amsmath}
\newtheorem{theor}{Theorem}
\newtheorem{lemma}{Lemma}
\newtheorem{cor}{Corollary}
\newtheorem{remark}{Remark}
\def\Re{\mathop{\rm Re}\nolimits}
\def\Im{\mathop{\rm Im}\nolimits}
\def\th{\mathop{\rm th}\nolimits}
\def\sn{\mathop{\rm sn}\nolimits}
\def\cn{\mathop{\rm cn}\nolimits}
\def\dn{\mathop{\rm dn}\nolimits}

\begin{document}

\title[]{COMPARISON OF HYPERBOLIC METRIC AND\\ TRIANGULAR RATIO METRIC IN A SQUARE}\thanks{The work of the second author was performed under the development program of the Volga Region
Mathematical Center (agreement no. 075-02-2025-1725/1).}

\author[A.~Kushaeva]{A.~Kushaeva}
\address{Kazan Federal University,
         Kazan, Russia}
\email[]{kushaeva1710@mail.ru}

\author[S.~Nasyrov]{S.~Nasyrov}
\address{Kazan Federal University,
         Kazan, Russia}
 \email[]{semen.nasyrov@yandex.ru}

\maketitle

\begin{abstract}
        Let $K$ be a square in the plane and $\rho_K(x,y)$ be the hyperbolic distance between $x$, $y\in K$.
        Denote by $s_K(x,y)$ the triangular ratio metric in $K$; for $x\neq y$ the value of $s_K(x,y)$ equals the ratio of the
        Euclidean distance $|x-y|$ between $x$, $y\in K$ to the value $\inf_{z\in \partial K}(|x-z|+|z-y|)$.
        We obtain a sharp estimate for the ratio of  $\th (\rho_K(x,y)/2)$ to  $s_K(x,y)$.

\noindent {Keywords:} {hyperbolic metric, triangular ratio metric, square}.

\noindent {Mathematics Subject Classification:} 51M09; 51M16; 30C20.
\end{abstract}

\section*{Introduction}
The hyperbolic metric plays an important role in the geometric function theory.
One of the main properties of the hyperbolic metric  is its conformal invariance:
the hyperbolic distance keeps under conformal mapping of domains of hyperbolic type.
Also the hyperbolic metric does not increase under holomorphic mappings what is a corollary of the
Schwarz lemma. In a simply connected domain $D$ of hyperbolic type,
the hyperbolic metric can be easily calculated if we know the
conformal mapping of the domain onto a canonic domain such as the
unit disc or the half-plane. If the mapping has a simple form, then
we can  calculate the hyperbolic distance between points
sufficiently well. Otherwise, the problem is to estimate  the
hyperbolic metric.

To do this, we can try to find another metric in  $D$,  equivalent
in some sense to the hyperbolic one, which can be simply calculated
in Euclidean terms. Many such metrics are known (see, e.g., the
monographs \cite{HKV,papa}). One of such metrics is the triangular
ratio
 metric $s=s_D$. Is is defined in $D$ as follows: if
$x\neq y$, then
$$
s_D(x,y)=\frac{|x-y|}{\inf_{z\in \partial D}(|x-z|+|z-y|)},
$$
and if $x=y$, then $s_D(x,y)=0$. Here $|x-y|$ denotes the Euclidean distance between $x$ and $y$.

The properties of this metric for some classes of domains and its
applications in the theory of conformal and quasiconformal mappings
were studied in many works (see, e.g.,
\cite{dknv,ranio,ranio2,ranio_vuor2,ranio_vuor}).

Very important planar domains are polygonal ones, in particular rectangles, see, e.g., \cite{b1,b2,dknv}. In this paper, we investigate the hyperbolic metric in a square.

In \cite{dknv}, the problem of finding the best constants in the
inequality
\begin{equation}\label{1d}
C_1\,s_D(x,y)\le\th (\rho_D(x,y)/2)\le C_2\,s_D(x,y), \quad x,y \in
D,
\end{equation}
 is investigated for some convex polygonal planar domains. It is easy to show that
  the maximal constant $C_1$ in the inequality is equal to $1$. The equality is attained in the
  limit when one of the points tends to the boundary of $D$. The problem of finding the
  minimal constant $C_2$ is more complicated. In \cite{dknv} a more simple problem to estimate the value
\begin{equation*}
\lim_{y\to x}\frac{\th (\rho_D(x,y)/2)}{s_D(x,y)}
\end{equation*}
for all $x\in D$ was solved. It is easy to see that
$$
\lim_{y\to x}\frac{\th (\rho_D(x,y)/2)}{s_D(x,y)}= \frac{2d_D(x)}{r_D(x)},
$$
where $d_D(x)=\mbox{\rm dist}(x,\partial D)$ is the distance from
$x$ to the boundary $\partial D$ of $D$, and $r_D(x)$ is the
conformal radius of $D$ in the point $x$, i.e., the radius of disc
$\{|z|<r=r_D(x)\}$, onto which we can map conformally the domain $D$
by the function $f$ with normalization $f(x)=0$, $f'(x)=1$.

In \cite{dknv}, the sharp estimates of the value
\begin{equation}\label{2d}
\frac{2d_D(x)}{r_D(x)},
\end{equation}
were established for many often used convex polygonal domains $D$,
for example, rectangles, isosceles triangles, and regular $n$-gons.
Also the set, on which the maximum of the value \eqref{2d} can be
attained, was described there.

Now we will describe some results from \cite{dknv} obtained for rectangles of the form $R=[-k,k] \times[-1,1]$, $k\ge 1$.  As it was shown in \cite[thrm.~4.7]{dknv},
the maximum of the ratio
$
{2d_R(x)}/{r_R(x)}
$
is attained in the point of intersection of the bisectors of adjacent angles of $R$; it is equal to
$$C(\lambda)=
\mathcal{K}(\lambda)\left|\frac{\cn(i\mathcal{K}(\lambda),\lambda)\dn(i\mathcal{K}(\lambda),
\lambda)}{\sn(i\mathcal{K}(\lambda),\lambda)}\right|.
$$
Here $\mathcal{K}(\lambda)$ is the complete elliptic integral of the first kind, the functions  $\sn$, $\cn,$  and $\dn$ are the Jacobi elliptic functions (see, e.g., \cite{akhiezer,avv}), and the parameter $\lambda$ is found from the relation
$$
\frac{2\mathcal{K}(\lambda)}{\mathcal{K}(\lambda')}=k, \quad
\lambda'=\sqrt{1-\lambda^2}.
$$

In particular, in the case of the square $K=[-1,1]\times[-1,1]$ the maximum of
$
{2d_K(x)}/{r_K(x)}
$
is attained at its center and it  is equal to $C(\lambda_0),$ where $\lambda_0=3-2\sqrt{2}$. It is easy to verify (see, e.g., \cite[thrm.~1.2]{b1}) that
\begin{equation}\label{3d}
C(\lambda_0)=\mathcal{K}(\sqrt{2}/2)=1.854074677\ldots
\end{equation}

As for the finding the best constant $C_2$ in \eqref{1d}, the
problem was open even for the case of a square. In
\cite[conj.~4.11]{dknv}, it was suggested that for arbitrary points
from $R$ the sharp estimate
$$
{s_R(x,y)}\le{\th\frac{\rho_R(x,y)}{2}}  \le
C(\lambda){s_R(x,y)}\,$$ holds. In this paper, we show that the
conjecture is valid for the case when $R$ is the square $K=[-1,1]^2$
and prove the following statement.

\begin{theor}\label{square}
For arbitrary points $x$ and $y$ from $K$ we have the sharp inequality
$$
{s_K(x,y)}\le{\th\frac{\rho_K(x,y)}{2}}  \le
C(\lambda_0){s_K(x,y)},$$ where $C(\lambda_0)$ is defined by
\eqref{3d}.
\end{theor}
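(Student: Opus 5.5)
The left inequality $s_K\le \th(\rho_K/2)$ holds in every convex domain, so the plan is to prove it first by a short general argument. For $x,y\in K$ let $z\in\partial K$ realize $\inf_{z}(|x-z|+|z-y|)$ and take a supporting line $L$ of $K$ through $z$. Let $H\supset K$ be the half-plane bounded by $L$. By monotonicity of the hyperbolic metric, $\rho_K\ge\rho_H$. In a half-plane one has the classical identity $\th(\rho_H(x,y)/2)=|x-y|/|x-\bar y|$, where $\bar y$ is the reflection of $y$ in $L$. The minimizing boundary point for $H$ gives $|x-\bar y|=\inf_{w\in L}(|x-w|+|w-y|)\le |x-z|+|z-y|$, which is exactly $s_K(x,y)$'s denominator. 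Hence $\th(\rho_K/2)\ge s_H\ge s_K$. Sharpness comes from letting one point tend to the boundary.

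For the right inequality, the plan is to reduce the two-point problem to the infinitesimal one already solved in \cite{dknv}. Fix $x,y$ and consider the hyperbolic geodesic $\gamma$ from $x$ to $y$. Put $\phi(t)=\th(t/2)$, which is subadditive and concave. Subadditivity of $\phi$ gives nothing directly, so instead I would use the "ratio of Möbius-type" form. Write $u=\th(\rho_K(x,y)/2)$ as the pseudo-hyperbolic distance $|f(x)-f(y)|/|1-\overline{f(x)}f(y)|$, where $f:K\to\mathbb D$ is conformal. The goal is then to compare it with $|x-y|/(|x-z|+|z-y|)$.

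The key step is the following. By symmetry of $K$ (the dihedral group of order 8), together with a reduction that moves the midpoint, one may assume $x$ and $y$ are placed so that the worst case is $y\to x$ at the center. Concretely, I would show that for fixed Euclidean $|x-y|$ and fixed direction, the ratio $\th(\rho_K/2)/s_K$ increases as the pair is pushed toward the center. To do this, I would use the Schwarz–Pick type estimate $\th(\rho_K(x,y)/2)\le \th\bigl(\tfrac12\int_{[x,y]}\lambda_K|dw|\bigr)$ along the segment, where $\lambda_K=2/r_K$ is the hyperbolic density. I would then bound $\lambda_K(w)\le C(\lambda_0)/d_K(w)$, using the pointwise result \eqref{3d} for $2d_K/r_K$. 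This leaves a purely Euclidean inequality to prove:
$$\th\Bigl(\tfrac{C}{2}\int_{[x,y]}\frac{|dw|}{d_K(w)}\Bigr)\le C\,\frac{|x-y|}{\inf_z(|x-z|+|z-y|)},\qquad C=C(\lambda_0).$$
Here $d_K$ is the distance to the square, which is piecewise linear along the segment. The integral is a quasihyperbolic-type length and can be computed explicitly with logarithms. The constraint $C\approx1.854<2$ should help.

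I expect this last Euclidean inequality to be the main obstacle, and it may even fail for long segments, since $\th$ saturates while $s_K$ can approach $1$. If it fails, the first fallback is to split into cases. When $s_K(x,y)\ge 1/C$, the bound is trivial because $\th<1$. When $s_K(x,y)<1/C$, the points are relatively close compared with their distances to $\partial K$, and the estimate above should hold. In that case I would use the concavity of $\th$, $\th(a)\le a$ refined, and the explicit form of $\inf_z(|x-z|+|z-y|)$ (reflection across the nearest side). Checking this case requires a case analysis according to which side realizes the infimum and which sides determine $d_K$ along the segment. Sharpness of $C(\lambda_0)$ follows from the limit $y\to x$ at the center, using \eqref{3d}.
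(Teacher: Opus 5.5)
\paragraph{Verdict: genuine gap in the right inequality.} Your proof of the left inequality is correct: you use the supporting half-plane at the minimizing boundary point, monotonicity of $\rho$, and the explicit half-plane formula. The paper only calls this easy.

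The right inequality is not proved. The reduction to ``$y\to x$ at the center'' is only asserted, and it is essentially the whole content of the statement. Even if the ratio did increase as a pair with fixed $|x-y|$ and direction is pushed toward the center, you would still have to control pairs centered at $0$ with arbitrary separation.

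More seriously, your concrete fallback fails. Write $C=C(\lambda_0)$. Your Euclidean inequality $\th\bigl(\tfrac{C}{2}\int_{[x,y]}|dw|/d_K(w)\bigr)\le C\,s_K(x,y)$ is false for short segments through the center. For $x=-ia$, $y=ia$ one has $s_K(x,y)=a$ and $d_K(it)=1-|t|$, so
\[
\th\Bigl(\frac{C}{2}\int_{-a}^{a}\frac{dt}{1-|t|}\Bigr)=\th\bigl(-C\log(1-a)\bigr)=Ca+\frac{C}{2}\,a^{2}+O(a^{3})>Ca=C\,s_K(x,y)
\]
for every small $a>0$. Numerically, at $a=0.1$ the left side is about $0.1929$ and the right side about $0.1854$. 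These pairs have $s_K<1/C$, so your case split does not help. The danger is short segments near the center, not long ones.

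\paragraph{Why this route cannot work, and what is missing.} The failure is structural. The bound $\lambda_K\le C/d_K$ from \eqref{3d} is attained only at the center. Along the segment, $C/d_K(it)=C(1+|t|+\dots)$ grows linearly. The true density satisfies $\lambda_K(w)=C+O(|w|^2)$ there, because $\lambda_K(-w)=\lambda_K(w)$. The inequality you want is already an equality to first order at the center. Integrating any pointwise bound of the type \eqref{3d} along a path therefore discards exactly the second-order information that decides the matter.

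What you need is a genuinely two-point reduction plus explicit control of the conformal map near the center. The paper does this in three steps:
\begin{enumerate}
\item For fixed $x$, it applies the maximum principle to the holomorphic function $\frac{f(x)-f(y)}{x-y}\cdot\frac{y-\overline{x}-2i}{f(y)-\overline{f(x)}}$ on each region $G_j$ where the reflection formula for $s_K$ is fixed. This places the extremal $y$ on a segment $\Sigma_k$.
\item A homothety centered at a vertex preserves $s_K$ but strictly increases $\rho_K$. This pushes the extremal pair into the configuration \eqref{w12}--\eqref{13d}.
\item The Schwarz--Christoffel map is then estimated directly. For $a\ge a_0$ it compares the hyperbolic diameter of $\{|z|\le r(a)\}$ with $s_K\ge a$. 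For $a<a_0$ it compares $f^{-1}$ with the linear map $w\mapsto \tfrac12 C(\lambda_0)\,w$.
\end{enumerate}
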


Now we will describe the structure of the paper.
In \S~1, we establish some properties of pairs $(x,y)\in K,$ where
the maximum of the ratio
$$\frac{\th (\rho_K(x,y)/2)}{s_K(x,y)}$$
is attained.
In particular, we prove that, in the case,
$x$ and $y$ are on the opposite sides of a sufficiently small square
homothetic to $K$ and satisfy some additional requirement
(Lemma~\ref{t4}).  Then, in \S~2 we give a proof of
Theorem~\ref{square}.

\section{Some auxiliary properties of the ratio of $\mbox{\rm th}({\rho}/{2})$ and $s$-metric in~the~square}

Our main problem is to estimate the value of
$\displaystyle
\frac{\th(\rho_K(x,y)/2)}{s_K(x,y)}$, $x\neq y$, where  $x$ and $y$
are in the square $K=ABCD$ (Fig.~1).
The diagonals of the square $ABCD$ subdivide it into four triangles. Because of the symmetry of the square, we may assume that the point $x$ is in the triangle $AOD$,
adjacent to the lower side $AD$ of the square. If the point $x$ is fixed, then the infimum (minimum)
\begin{equation}\label{4d}
\inf_{z\in \partial K}(|x-z|+|y-z|)
\end{equation}
is attained on one of the sides of the square. We will find out how the location of the second point $y$ affect on the location of the point $z$ at which the minimum is attained. Note that in \cite[\S~3]{dknv} such situation was described for the case of arbitrary rectangle. Here we will consider the case of the square in more details. Consider the points $p$ and
$q$ in the square $ABCD$ satisfying the following properties: \smallskip

\noindent (1) the segments $Ap$ and $Ax$  lie on the straight lines,
symmetric to each other with respect to the diagonal $AC$ of the
square $ABCD$; \smallskip

\noindent (2) the segments $Dq$ and $Dx$  lie on the straight lines,
symmetric to each other with respect to the diagonal $BD$ of the
square $ABCD$; \smallskip

\noindent (3) $\Im p=\Im q=- \Im x$. \smallskip

\noindent Note that the points $p$ and $q$ may coincide if $x$ is the center of the square.

The segments $\Sigma_1=Ap$, $\Sigma_2=Bp$,  $\Sigma_3=Cq$, $\Sigma_4=Dq$ and $\Sigma_5=pq$ separate
 the square into four parts; we will denote them by $G_1$, $G_2$, $G_3,$ and $G_4$ (Fig.~1).
Further, if we want to point out that the segments $\Sigma_k$
correspond to the point $x$, we will write $\Sigma_k^x$ instead of
$\Sigma_k$.

%%%%%%%%%%%%%%%%%%%%%%%%%%%%%%%%%%%%%%%%%%%%%%%%%%%%%%%%%%%%%%%%%%%%%%%%%%%%%%%%%%%%%%%%%%%%%%%%%%%%%%%%%%%%%%%%%%%%%%%%%%%%%%%%%%

\begin{figure}[h]
\begin{center}
\includegraphics[width=0.3\textwidth]{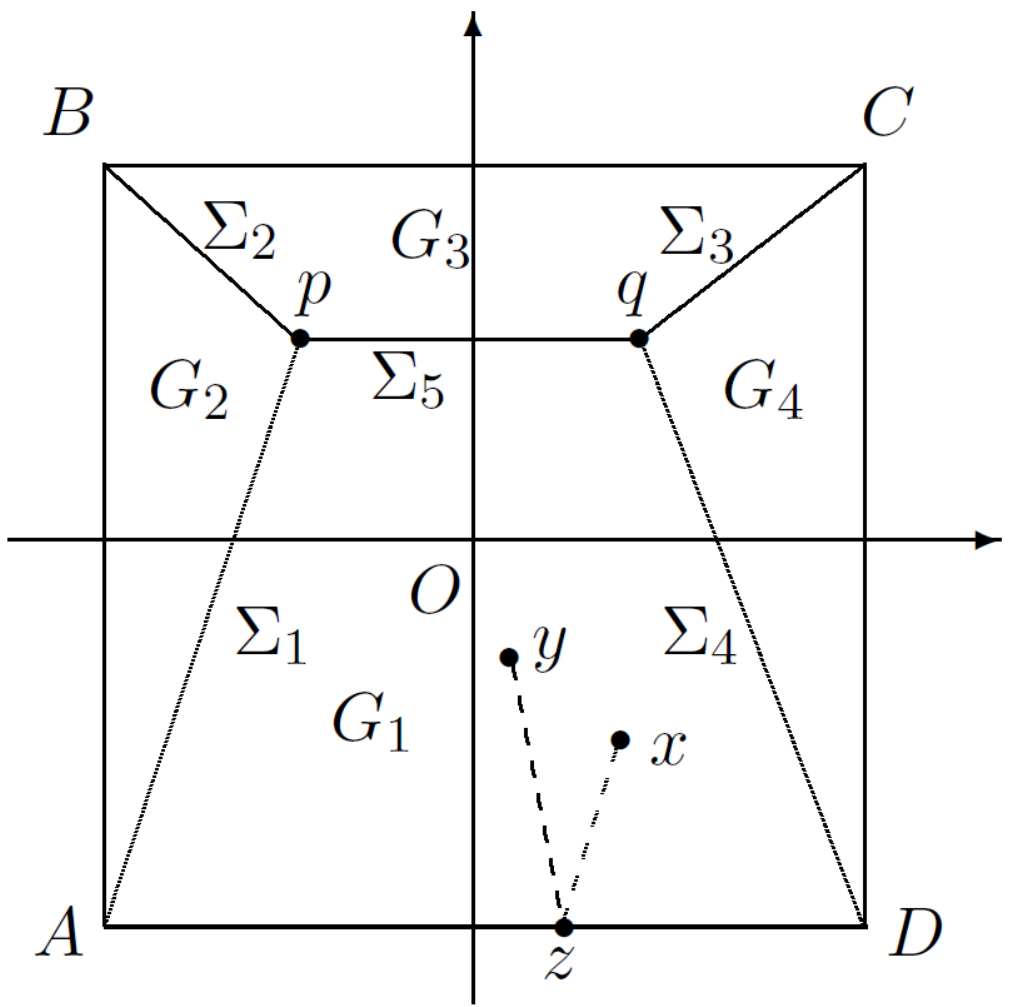}
\caption{Decomposition of a square into parts $G_j$ depending on
the location of the point $z$ at which the infimum
\eqref{4d} is attained, under  the fixed point $x$.} \label{fig1}
\end{center}
\end{figure}

%%%%%%%%%%%%%%%%%%%%%%%%%%%%%%%%%%%%%%%%%%%%%%%%%%%%%%%%%%%%%%%%%%%%%%%%%%%%%%%%%%%%%%%%%%%%%%%%%%%%%%%%%%%%%%%%%%%%%%%%%%%%%%%%%%%%%%

\begin{lemma}\label{l1}
        Let $x$ be in the triangle $AOD$. If $y\in G_1,$ then the infimum \eqref{4d} is attained at a point $z\in AD$ and the triangular ratio metric between $x$ and $y$ equals
\begin{equation}\label{5d}
\displaystyle s_K(x,y) = \frac{|x-y|}{|x-\overline{y}+2i|}.
\end{equation}
If $y\in G_2,$ then the infimum \eqref{4d} is attained at  a point $z\in AB$ and
\begin{equation}\label{6d}
\displaystyle s_K(x,y) = \frac{|x-y|}{|x+\overline{y}+2|}.
\end{equation}
If $y\in G_3,$ then the infimum \eqref{4d}  is attained at  a point $z\in BC$ and
\begin{equation}\label{7d}
\displaystyle s_K(x,y) = \frac{|x-y|}{|x-\overline{y}-2i|}.
\end{equation}
At last, if $y\in G_4,$ then the infimum \eqref{4d}  is attained at  a point $z\in CD$ and
\begin{equation}\label{8d}
\displaystyle s_K(x,y) = \frac{|x-y|}{|x+\overline{y}-2|}.
\end{equation}
\end{lemma}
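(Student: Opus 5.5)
The plan is to reduce \eqref{4d} to a minimum of four explicit distances by the reflection principle, and then to identify the regions where each distance is the smallest.

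\textbf{Step 1 (one side).} Use the coordinates $A=-1-i$, $D=1-i$, $C=1+i$, $B=-1+i$, so that $AD$ is the bottom side, $AB$ the left side, $BC$ the top side and $CD$ the right side. Let $L$ be the line containing a side of $K$. Since $x$ and $y$ lie in the closed half-plane bounded by $L$ that contains $K$,
$$\min_{z\in L}(|x-z|+|z-y|)=|x-y^*|,$$
where $y^*$ is the mirror image of $y$ in $L$. The minimum is attained where the segment $[x,y^*]$ meets $L$.

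This point lies between the orthogonal projections of $x$ and $y$ onto $L$. Both projections lie on the side itself, because $x,y\in K$ and $K$ is a square. Hence the minimum over the line equals the minimum over the side.

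\textbf{Step 2 (the four distances).} The reflections of $y$ in the bottom, left, top and right lines are
$$\bar y-2i,\quad -\bar y-2,\quad \bar y+2i,\quad -\bar y+2 .$$
Therefore
$$\inf_{z\in\partial K}(|x-z|+|z-y|)=\min\{|x-\bar y+2i|,\ |x+\bar y+2|,\ |x-\bar y-2i|,\ |x+\bar y-2|\}.$$
This already gives the four candidate formulas \eqref{5d}--\eqref{8d}. It remains to show that the $k$-th expression is the smallest exactly when $y\in G_k$.

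\textbf{Step 3 (Voronoi reformulation).} Rewrite each quantity as a distance from $y$. For example,
$$|x-\bar y+2i|=|y-(\bar x-2i)|,$$
so it is the distance from $y$ to the reflection $x_b$ of $x$ in the bottom line. Similarly the other three are the distances from $y$ to $x_l$, $x_t$, $x_r$, the reflections of $x$ in the left, top and right lines.

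So the region of $y$ served by each side is the intersection of $K$ with the Voronoi cell of the corresponding reflected point. The boundaries are pieces of perpendicular bisectors, and these have a simple form.

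\emph{Opposite sides.} The points $x_b$ and $x_t$ have the same real part and imaginary parts $-2-\Im x$ and $2-\Im x$. Their bisector is the horizontal line $\Im y=-\Im x$, which matches condition (3) and the segment $\Sigma_5=pq$. The same computation gives a vertical line for $x_l$ and $x_r$.

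\emph{Adjacent sides.} For example, $x_b$ and $x_l$ are both at distance $|A-x|$ from the common corner $A$, so their bisector passes through $A$. Its direction is obtained from the polar angles of $x_b$ and $x_l$ about $A$: if $Ax$ makes angle $\theta$ with $AD$, these angles are $-\theta$ and $\pi/2+\theta$. The analogous statements hold at $B$, $C$ and $D$.

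\textbf{Step 4 (identify the cells with the $G_k$).} This is the step I expect to be the main obstacle. Using $x\in\triangle AOD$ (so $\Im x\le -|\Re x|$), one has to determine which bisector pieces actually appear on the boundary of the Voronoi cells inside $K$, and check that they are exactly the segments $\Sigma_1,\dots,\Sigma_5$ meeting at $p$ and $q$.

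The relevant bisector through a corner must be matched with the line defined in condition (1) or (2). I would first compute the bisector of the two competing reflections explicitly, e.g. $x_b$ and $x_l$ at $A$. I would then check at which triple point it meets $\Im y=-\Im x$: the triple point of $x_b,x_l,x_t$ should be $p$, and that of $x_b,x_r,x_t$ should be $q$.

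Care is needed here. A naive reading suggests the bisector of $x_b,x_l$ is the diagonal $AC$, whereas the paper describes $Ap$ as the line symmetric to $Ax$ with respect to $AC$. So the precise pair of competing reflections near each corner has to be rechecked. The comparison with the opposite-side reflection ($x_t$) and the constraint from (3) determine which line bounds $G_1$ and $G_2$. If necessary I would verify the identification directly by evaluating both distances at the points $A$, $p$, $B$ and using that each cell is convex.

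Once the cells are matched with the $G_k$, the attaining point $z$ lies on the corresponding side by Step 1, and the lemma follows.
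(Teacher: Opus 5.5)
Your Steps 1--2 and the opposite-side bisector $\Im y=-\Im x$ are the same as the paper's argument: reflect in a side, then equate two of the four expressions. The gap is in the adjacent-side computation. Because of it, Step 4 (the actual content of the lemma beyond the reflection trick) is left unresolved, and your plan for it points in the wrong direction.

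Reflecting $x$ in the vertical line $AB$ sends the polar angle $\theta$ of $x-A$ to $\pi-\theta$, not to $\pi/2+\theta$ (that would be a rotation). So the perpendicular bisector of $x_b$ and $x_l$ is the line through $A$ at angle $\tfrac12\bigl((-\theta)+(\pi-\theta)\bigr)=\pi/2-\theta$. This is exactly the mirror image of the line $Ax$ in the diagonal $AC$, i.e. condition (1). The ``diagonal $AC$'' you obtained comes only from the wrong angle. There is no discrepancy with the statement, and the competing reflections at $A$ really are $x_b$ and $x_l$.

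The paper does this step algebraically. With $u=x+1+i$ and $v=y+1+i$ one has $|y-x_b|=|u-\bar v|$ and $|y-x_l|=|u+\bar v|$, so $|y-x_b|^2-|y-x_l|^2=-4\Re(uv)$. This vanishes iff $\arg u+\arg v=\pi/2$, and it is negative (the bottom side wins) iff the ray $Ay$ lies below the ray $Ap$. The same computation at $D$ gives condition (2).

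To finish Step 4 you still need the following.
\begin{itemize}
\item Since $p$ lies on the bisector of $x_b,x_l$ and on that of $x_b,x_t$, it is equidistant from $x_b,x_l,x_t$. Hence the bisector of $x_l,x_t$, which passes through $B$, is the line $Bp$; this gives $\Sigma_2$. Likewise $q$ gives $\Sigma_3=Cq$ and $\Sigma_4=Dq$.
\item The hypothesis $x\in\triangle AOD$ is used to show that the finite Voronoi edge is the horizontal one between $x_b$ and $x_t$ (the segment $pq$), not the vertical one between $x_l$ and $x_r$. This means the cells of $x_l$ and $x_r$ do not meet in $K$.
\item Computing $p,q$ as in Lemma~\ref{pq}, with $t=\Re x$ and $a=-\Im x$, gives $\Re q-\Re p=2(a^2-t^2)/(1-t^2)$. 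This is $\ge 0$ precisely because $|\Re x|\le-\Im x$.
\item Combine the explicit signs $|y-x_b|^2-|y-x_t|^2=8(\Im y+\Im x)$ and $-4\Re(uv)$ with convexity of the cells. Then the four cells intersected with $K$ are $G_1,\dots,G_4$ in the stated order.
\end{itemize}
The paper states this last identification tersely and does not spell out the check $\Re p\le\Re q$. It does, however, have the correct boundary lines, which your proposal does not.
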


\begin{proof}
Let the point $z$, at which the infimum \eqref{4d} is attained, be
on the side $AD$, then consider the point $y^*$, symmetric to $y$
with respect to the straight line containing $AD$. It is obvious
that $|x-z|+|z-y| = |x-z|+|z-y^*|$, from which it follows
that the infimum of $|x-z|+|z-y|$ is attained at the point $z \in
\partial K$ such that  $z$ is on the segment with endpoints $x$  and $y^{*}$, consequently,
$$
|x-z|+|z-y| = |x-z|+|z-y^*|=|x-y^*|.
$$
Since $y^* = \overline{y} - 2i$, we have
$$
\displaystyle s_R(x,y) = \frac{|x-y|}{|x-y^*|}=
\frac{|x-y|}{|x-\overline{y}+2i|},$$ and from this we obtain
\eqref{5d}. Similarly we prove that in the case when the point $z$,
at which the infimum \eqref{4d} is attained, is on the sides $AB$,
$BC$ and $AD$,
 we have equalities \eqref{6d}, \eqref{7d}, and \eqref{8d},
 and we take  $y^*$ symmetric to $y$ with respect to the straight lines on which the corresponding sides of the square lie.

Now we will study the case when the infimum \eqref{4d} is attained
at two different points lying on distinct sides of the square. First
assume that these are adjacent sides, say, $AD$ and $AB$. Then,
equating the right-hand sides of \eqref{5d} and \eqref{6d}, we
obtain $$|x-\overline{y}+2i|=|x+\overline{y}+2|.$$ The vectors
$u=Ax$ and $v=Ay$ have the forms $u=x+1+i$ and $v=y+1+i$ and satisfy
$|u-\overline{v}|=|u+\overline{v}|$. From this it follows that
$\Re(uv)=0$ which means that $x$ and  $y$ are symmetric to each
other with respect to the straight line containing the diagonal $AC$
of the square. If the infimum is attained on the opposite sides,
say, on $AD$ and $AB$, then
$|x-\overline{y}+2i|=|x-\overline{y}-2i|$, and this means
that $\Im y=-\Im x$. Therefore, the infimum in \eqref{4d} is
attained at two different points iff  $y$ is on one of the segments
$\Sigma_j$, $1\le j\le 5$.  The statement of Lemma~\ref{l1} is
proved.
\end{proof}

\medskip

Now we will describe the points $p$ and $q$ in terms of  $x$.

\begin{lemma}\label{pq}
Let $x$ be in the triangle $AOD$ and  $t=\Re x$, $a=-\Im x$, $a\in
[0,1)$. Then,
$$
\Re p= -\frac{a^2+t}{1+t}, \quad \Re q= \frac{a^2-t}{1-t},\quad  \Im p=\Im q=a.
$$
\end{lemma}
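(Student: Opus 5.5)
The plan is to compute $p$ and $q$ directly from their defining properties (1)--(3), using elementary reflection geometry in coordinates. We have $A=-1-i$, $B=-1+i$, $C=1+i$, $D=1-i$ and $x=t-ia$. The only nontrivial inputs are the two reflections, across the line $AC$ and across the line $BD$. Since $x$ lies in the triangle $AOD$, we have $|t|<1$ when $a<1$, so the denominators $1\pm t$ are nonzero. Property (3) gives $\Im p=\Im q=a$ immediately, so only the real parts need to be found.

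\emph{The point $p$.} The line $AC$ passes through $A$ with direction $1+i$. Reflection across it, applied to vectors based at $A$, swaps the real and imaginary components. Equivalently, in the notation of the proof of Lemma~\ref{l1}, the condition is $\Re(uv)=0$ with $u=x+1+i$ and $v=p+1+i$. The vector $u$ has components $(t+1,\,1-a)$, so $Ap$ has direction $(1-a)+i(1+t)$, and we write $p=A+s\bigl((1-a)+i(1+t)\bigr)$ with $s>0$. Imposing $\Im p=a$ gives $-1+s(1+t)=a$, hence $s=(1+a)/(1+t)$. Then
$$
\Re p=-1+\frac{(1-a)(1+a)}{1+t}=-\frac{a^2+t}{1+t}.
$$

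\emph{The point $q$.} The line $BD$ passes through $D$ with direction $-1+i$. Reflection across it acts on vectors based at $D$ by $(X,Y)\mapsto(-Y,-X)$. The vector $Dx$ is $(t-1)+i(1-a)$, so the reflected direction is $(a-1)+i(1-t)$. Writing $q=D+s\bigl((a-1)+i(1-t)\bigr)$ and imposing $\Im q=a$ gives $s=(1+a)/(1-t)$. Hence
$$
\Re q=1-\frac{(1-a)(1+a)}{1-t}=\frac{a^2-t}{1-t}.
$$

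There is no real obstacle here. The only care needed is to get the correct sign of each reflection and to check that the resulting points lie inside $K$, which serves as a consistency check. For instance, $-1\le \Re p\le \Re q\le 1$ follows from $|t|\le a<1$, which holds in the triangle $AOD$. As a second check, $p=q=0$ when $x=0$, in agreement with the remark that $p$ and $q$ coincide at the center.
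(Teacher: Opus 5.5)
Your proof is correct and is essentially the paper's argument. The paper writes the reflection condition as $\Re(uv)=0$ with $u=x+1+i$, $v=p+1+i$ and solves $(1+t)(1+\Re p)=1-a^2$, whereas you parametrize the reflected ray from $A$ and intersect it with the line $\Im w=a$; this is the same computation, and you also work out the $q$ case explicitly, which the paper only calls ``quite similar.''
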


\begin{proof}
We will only prove the formulas for the point $p$, for $q$ the proof
is quite similar. Denote $b=\Re p$.  From the proof of
Lemma~\ref{l1} it follows that for the points $u=x+1+i$ and
$v=p+1+i$  the equality $\Re (uv)=0$ holds. We have
$$
\Re(uv)=(t-ia+1+i)(b+ia+1+i)=(1+t)(1+b)-(1-a^2).
$$
Consequently,
$$b=\frac{1-a^2}{1+t}-1=-\frac{a^2+t}{1+t}.$$
\end{proof}

\begin{cor}\label{cr1}
If $b=\Re p$, $c=\Re q$, then $(1+t)(1+b)=1-a^2$, $(1-t)(1-c)=1-a^2$.
\end{cor}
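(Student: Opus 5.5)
The statement to be proved is Corollary~\ref{cr1}. It is an immediate algebraic consequence of Lemma~\ref{pq}, so the plan is simply to substitute the explicit formulas for $\Re p$ and $\Re q$ given there and simplify.

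For the first identity, recall from the proof of Lemma~\ref{pq} that $b$ was obtained from the relation $\Re(uv)=0$ with $u=x+1+i$ and $v=p+1+i$. That relation reduced to $(1+t)(1+b)-(1-a^2)=0$, which is exactly the first claimed equality. Equivalently, from $b=-(a^2+t)/(1+t)$ we get
$$1+b=\frac{1+t-a^2-t}{1+t}=\frac{1-a^2}{1+t}.$$
Multiplying by $1+t$ gives the claim. Note that $1+t>0$, because $x$ lies in the triangle $AOD$, so $|t|<1$ except at the vertices, and the vertices are excluded since $a<1$ or by interiority.

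For the second identity, use $c=(a^2-t)/(1-t)$. Then
$$1-c=\frac{1-t-a^2+t}{1-t}=\frac{1-a^2}{1-t},$$
and multiplying by $1-t\neq0$ gives $(1-t)(1-c)=1-a^2$. Alternatively, this follows from the first identity via the reflection $z\mapsto-\overline z$, which swaps $A$ with $D$, $p$ with $q$, and $t$ with $-t$.

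There is no real obstacle here. The only point requiring care is that the denominators $1\pm t$ are nonzero, which holds because $x$ lies strictly inside the square.
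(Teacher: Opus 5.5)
Your proof is correct and follows the paper's route: the paper treats the corollary as immediate from Lemma~\ref{pq}, since $(1+t)(1+b)=1-a^2$ is exactly the relation $\Re(uv)=0$ derived in that proof, and the identity for $c$ comes from the analogous computation at $D$ (equivalently, from your symmetry $z\mapsto-\overline{z}$). Your check that $1\pm t\neq 0$ is fine, and can be stated more precisely: in the triangle $AOD$ one has $|t|\le a<1$.
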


\begin{lemma}\label{t3} Let $x$ from the triangle $AOD$ be fixed. Then,
the maximum of the value
\begin{equation}\label{9d}
\displaystyle\frac{\th ({\rho_K(x,y)}/{2})}{s_K(x,y)}
\end{equation}
is attained at a point  $y$ lying on one of the segments $\Sigma_k$,
$1\le k\le 5$.
    \end{lemma}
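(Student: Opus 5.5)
Fix $x$ in the triangle $AOD$. The plan is to show that the ratio \eqref{9d} cannot attain its maximum at an interior point of any $G_j$ (relative to $K$), nor in the limit as $y\to\partial K$ or $y\to x$. We then conclude that the supremum, which is a maximum by continuity on a compact set, must be reached on $\bigcup_k\Sigma_k$.

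\textbf{Step 1: behaviour at the boundary and near $x$.} As $y\to\partial K$ we have $\rho_K(x,y)\to\infty$, so $\th(\rho_K/2)\to1$. By Lemma~\ref{l1}, $s_K(x,y)$ tends to $|x-y|/|x-y^*|$, which equals $1$ when $y$ lies on the relevant side; hence the ratio tends to $1$ there. As $y\to x$ the ratio tends to $2d_K(x)/r_K(x)$, and this value is also approached along points of $\Sigma_k$ only if $x\in\Sigma_k$. To handle this case I would instead show that, for $y$ near $x$ inside $G_j$, moving $y$ increases the ratio, which is a special case of Step 2. Values close to $1$ never give the maximum, since the ratio exceeds $1$ somewhere, for instance near $x$, where $2d/r>1$. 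So the maximum over $\overline K\setminus\{x\}$ is attained at an interior point $y$, and it suffices to exclude interior points of the $G_j$.

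\textbf{Step 2: no interior critical maximum in $G_j$.} Inside $G_1$, say, Lemma~\ref{l1} gives $s_K(x,y)=|x-y|/|x-y^*|$ with $y^*=\bar y-2i$. The key observation is that
\[
\frac{x-y}{x-y^*}
\]
is, up to conjugation, the pseudo-hyperbolic distance of the half-plane $H_1=\{\Im z>-1\}\supset K$ between $x$ and $y$. Indeed, reflection in the line $\Im z=-1$ sends $y$ to $y^*$, so $s_K(x,y)=\th(\rho_{H_1}(x,y)/2)$ for $y\in G_1$. The ratio to maximize is therefore
\[
F(y)=\frac{\th(\rho_K(x,y)/2)}{\th(\rho_{H_1}(x,y)/2)}.
\]
Map $K$ conformally onto the unit disc by $f$ with $f(x)=0$. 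Then $\th(\rho_K(x,y)/2)=|f(y)|$, and $\th(\rho_{H_1}(x,y)/2)=|g(y)|$ with $g$ the Möbius map of $H_1$ onto the disc with $g(x)=0$. Hence $F=|f/g|=|h|$, where $h=f/g$ is holomorphic and zero-free in $K\setminus\{x\}$, with a removable singularity at $x$ and $h(x)\neq0$. By the maximum modulus principle, $|h|$ has no local maximum at any interior point of $G_1$ unless $h$ is constant, which is impossible. The same argument works for $G_2,G_3,G_4$ with the half-planes bounded by $AB$, $BC$, $CD$. This also covers $y=x$ when $x$ lies interior to some $G_j$, and so removes the subtlety left in Step 1.

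\textbf{Step 3: conclusion.} The maximum over $K\setminus\{x\}$ exists, because the ratio extends continuously to $\overline K$ with value $1$ on $\partial K$ and value $2d/r>1$ at $x$. By Step 2 it cannot lie in the interior of any $G_j$, so it lies on some $\Sigma_k$. If $x$ itself is on some $\Sigma_k$, the value at $x$ is already there.

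The main obstacle is the rigorous treatment of the value at $y=x$ together with the boundary limits. This reduces to checking that $h$ extends holomorphically with $|h(x)|=2d_K(x)/r_K(x)$. It follows from $f'(x)=1/r_K(x)$ and $|g'(x)|=1/(2d_{H_1}(x))$, where $d_{H_1}(x)=d_K(x)$ holds because $x\in AOD$ is nearest to side $AD$.
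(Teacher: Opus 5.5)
Your proof is correct and is essentially the paper's argument: on each $G_j$ you write the ratio as the modulus of a function holomorphic in all of $K$, and you apply the maximum modulus principle. Your $f/g$, with $g$ the M\"obius map of the half-plane bounded by the corresponding side, agrees up to a unimodular factor with the paper's $\frac{f(x)-f(y)}{x-y}\cdot\frac{y-\bar x+2i}{f(y)-\overline{f(x)}}$ (the paper's displayed $-2i$ is a typo; your version is right).

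The differences are minor, and they do not change the method. You exclude $\partial K$ using $2d_K(x)/r_K(x)>1$, whereas the paper uses $g\ge 1$ together with $g=1$ on $AD$. You also handle the existence of the maximum and the removable point $y=x$ more explicitly than the paper does.
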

    \begin{proof} To prove the lemma, we will use the maximum principle for analytic functions.

Consider the case when $y$ is in $G_1$ (for other domains $G_j$ the
proof is similar). Let $f$ conformally map the square $K$ onto the
upper half-plane. Introduce the function
$$
g(y) = \displaystyle\frac{\th \frac{\rho_K(x,y)}{2}}{s_K(x,y)}
$$
By \eqref{5d},
$$
g(y) = \frac{|f(x)-f(y)|\,|x-\overline{y}+2i|}{|f(x)-\overline{f(y)}|\,|x-y|}.
$$
It is easy to show that $g$ is the modulus of the function
$$
\frac{f(x)-f(y)}{x-y}\,\frac{y-\overline{x}-2i}{f(y)-\overline{f(x)}},
$$
holomorphic in~$G_1$.

By the maximum principle, the maximum of $g(y)$ is attained either
on one of the segments $\Sigma_k$, $3\le k\le 5$, or on the lower
side of the square. But if $y$ lies on the  lower side of the
square, then $g(y)=1$. Since   $g$ satisfies $g(y)\ge 1$, $y\in
G_1$, we see that the maximum can be only attained at one of the
segments $\Sigma_k$, $3\le k\le 5$.
 \end{proof}

The following lemma clarifies the statement of Lemma~\ref{t3}.

    \begin{lemma}\label{t4} The maximum of the value \eqref{9d}
is attained at a point $(x,y),$ where $x$  is in the triangle $AOD$ and $y$ is on the segment~$\Sigma_5^x$.
    \end{lemma}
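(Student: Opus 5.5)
The plan is to combine Lemma~\ref{t3} with a homothety argument that rules out the segments $\Sigma_1,\dots,\Sigma_4$. Let $(x,y)$ be a pair at which the maximum of \eqref{9d} is attained, with $x$ in the triangle $AOD$. By Lemma~\ref{t3}, $y$ lies on some $\Sigma_k^x$, $1\le k\le5$. It remains to show that $k\in\{1,2,3,4\}$ cannot occur.

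\emph{Corner segments.} Suppose $y\in\Sigma_1^x$; the segments $\Sigma_2,\Sigma_3,\Sigma_4$ are treated in the same way with the vertices $B$, $C$, $D$. Here $y$ lies on the ray from $A$ symmetric to the ray $Ax$ with respect to the diagonal $AC$. By the proof of Lemma~\ref{l1}, the infimum \eqref{4d} is then attained simultaneously on the two sides through $A$, and
$$s_K(x,y)=\frac{|x-y|}{|x-\overline{y}+2i|}=\frac{|x-y|}{|x+\overline y+2|}.$$
Let $h_\lambda(w)=A+\lambda(w-A)$ with $\lambda>1$ close to $1$, so that $h_\lambda(x),h_\lambda(y)\in K$. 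I will check three things.

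\begin{enumerate}
\item The condition ``$y$ lies on the ray symmetric to $Ax$'' is invariant under $h_\lambda$, and $h_\lambda(x)$ stays in the triangle $AOD$ near $x$.
\item For the new pair, the infimum \eqref{4d} is still attained on $AD\cup AB$. This follows from the description of the regions $G_j$ and the fact that $h_\lambda(y)\in\Sigma_1^{h_\lambda(x)}$, using the explicit form of $p$ from Lemma~\ref{pq}.
\item Reflections in the lines $AD$ and $AB$ commute with homotheties centred at $A$. Hence the quotient above is unchanged: $s_K(h_\lambda x,h_\lambda y)=s_K(x,y)$.
\end{enumerate}

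On the other hand, $h_\lambda$ is a conformal map of $K$ onto the larger square $h_\lambda(K)\supsetneq K$. Therefore
$$\rho_K(x,y)=\rho_{h_\lambda K}(h_\lambda x,h_\lambda y)<\rho_K(h_\lambda x,h_\lambda y),$$
where strict inequality holds by the strict monotonicity of the hyperbolic metric under a proper inclusion of domains (Schwarz lemma). So \eqref{9d} strictly increases, contradicting maximality. Hence $y\in\Sigma_5^x$. On $\Sigma_5$ the two competing sides $AD$ and $BC$ are opposite, and no homothety preserves both of them, which explains why this case survives.

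\emph{Existence of the maximum.} The main obstacle is that \eqref{9d} is a function on the non-compact set $\{x\ne y\}$, so its supremum might only be approached. There are two limiting regimes.
\begin{itemize}
\item When one point tends to $\partial K$, the ratio tends to $1$, as noted in the Introduction.
\item When $y\to x$, the ratio tends to $2d_K(x)/r_K(x)$.
\end{itemize}
I would first show that the supremum exceeds both of these limits. If it did not, the claim would be understood for approximating pairs, and the same homothety argument applies to near-maximizing sequences. Concretely, the perturbation gives a strict increase of the ratio along a continuous family, so a maximizing sequence with $y$ on a corner segment can be pushed to a better one. I will therefore phrase the lemma as the statement that the supremum is attained, or approached, only along pairs with $y\in\Sigma_5^x$. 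A second delicate point is the verification in item~2: the homothetic images must remain in the configuration of Lemma~\ref{l1}, and I expect this to follow from the invariance of the ray condition together with Corollary~\ref{cr1}.
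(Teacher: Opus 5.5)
Your homothety argument is the paper's own proof of Lemma~\ref{t4}. For a maximizing pair with $y$ on a corner segment, the homothety centred at that vertex keeps $s_K$ fixed and strictly increases $\rho_K$.

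Two details need repair.
\begin{itemize}
\item You must exclude $y=p$, as the paper does. For $y=p$ the image pair leaves the tie configuration, because $h_\lambda(y)$ is pushed past $\Sigma_5$ and the minimum moves to $BC$. This case is harmless anyway, since $p\in\Sigma_5^x$.
\item Item~1 is false when $x$ lies on $OD$: then $h_\lambda(x)$ leaves the triangle $AOD$ for every $\lambda>1$. So the region description from Lemmas~\ref{l1} and \ref{pq} cannot be quoted directly in item~2.
\end{itemize}
The paper sidesteps both issues with a continuity argument. At $(x,y)$ the minimum over $BC\cup CD$ is strictly larger than $m$, and this persists for small $\varepsilon$. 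The minima over $AB$ and $AD$ are multiplied exactly by $\alpha$, because the contact points just slide along these sides.

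The genuine problem is your ``Existence'' part, which goes beyond the lemma and is wrong in three places. The lemma, like the paper's proof (``Let the maximum of \eqref{9d} be attained at some point $(x,y)$''), presupposes a maximizer. Your homothety argument already proves exactly that conditional statement; what you add does not hold.
\begin{itemize}
\item The supremum does not exceed the diagonal limits. By Theorem~\ref{square} it equals $C(\lambda_0)=2d_K(O)/r_K(O)$.
\item Your reformulation (``approached only along pairs with $y\in\Sigma_5^x$'') is false. Take $x=-i\delta$, so $p=-\delta^2+i\delta$, and choose $y\in\Sigma_1^x\setminus\{p\}$ with $|y-p|<\delta$. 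Since \eqref{9d} extends continuously to the diagonal by $2d_K/r_K$, the ratio tends to $C(\lambda_0)$ as $\delta\to0$, along corner segments.
\item Improving each term of a maximizing sequence slightly says nothing about where the supremum is approached, since the improved pairs still lie on corner segments.
\end{itemize}

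To make the use of the lemma in \S2 rigorous without assuming existence, argue by contradiction. Suppose the supremum of \eqref{9d} exceeded $C(\lambda_0)$.
\begin{itemize}
\item Near the diagonal, the values are close to $2d_K/r_K\le C(\lambda_0)$, by the result quoted in the Introduction.
\item Near $\partial K$, they are close to $1$, or, when both points approach a corner, to quadrant values, which are at most $\sqrt2$.
\end{itemize}
So the supremum would be attained at an interior pair $x\ne y$. Your homothety argument then puts $y$ on $\Sigma_5^x$, where \S2 gives the contradiction.
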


    \begin{proof}
Let the maximum of \eqref{9d} be attained at some point $(x,y)$.
Then, $x$ and $y$ are not on the boundary of $K$, since at such
points the value of \eqref{9d} equals $1$.   We can assume that $x$
is in the triangle $AOD$. Then, by Lemma~\ref{t3}, the point  $y$ is
on one of the segments $\Sigma_k$. We will show that if $y\not\in
\Sigma_5$, then there exists a point $(\tilde{x},\tilde{y})$ from
the square $ABCD$ at which the value of \eqref{9d} is strictly
bigger that that at the point $(x,y)$.

Let, for certainty, $y \in \Sigma_1$ and $y\neq p$.
Then, the minimum $m=\min_{z\in \partial K}(|y-z|+|z-y|)$ is attained
at two different points lying on the sides $AB$ and $AD$ of the
square $K$, i.e.,
$$
\min_{z\in AB}(|x-z|+|z-y|)=\min_{z\in  AD}(|x-z|+|z-y|)=m,
$$
and, in addition,  $$ M=\min_{z\in BC\cup CD}(|x-z|+|z-y|)>m.$$

Now consider the homothety $\phi$ with center at the point $A$,
lower left vertex of the square, and ratio $\alpha=1+\varepsilon$,
$\varepsilon>0$. It has the form $w=\phi(z)=\alpha (z+(1+i))-(1+i)$.
Denote $\tilde{x}=\phi(x)$, $\tilde{y}=\phi(x)$. We will assume that
$\varepsilon$ is so small that the points $\tilde{x}$ and
$\tilde{y}$ are inside  the square $K$. We have
$|\tilde{x}-\tilde{y}|=\alpha|x-y|$. For sufficiently small
$\varepsilon$
$$
\min_{z\in AB}(|\tilde{x}-z|+|z-\tilde{y}|)=\min_{z\in AD}(|\tilde{x}-z|+|z-\tilde{y}|)=\alpha m.
$$
Since $M>m$ and the value $\min_{z\in BC\cup
CD}(|x-z|+|z-y|)$ depends continuously on the points $x$ and $y$, for small $\varepsilon,$ we have
$$
\min_{z\in BC\cup CD}(|\tilde{x}-z|+|z-\tilde{y}|)>\alpha m.
$$
Therefore,
$$
\min_{z\in \partial K}(|\tilde{x}-z|+|z-\tilde{y}|)=\alpha m
$$
and then
\begin{equation}\label{sk}
 s_K(\tilde{x},\tilde{y})= s_K(x,y).
\end{equation}

Now let us consider the behavior of the hyperbolic metric
$\displaystyle\th \frac{\rho_K(x,y)}{2}$ under the homothety $\phi$,
which is a conformal mapping. Applying $\phi$ to the square $K$,
because of the invariance of the hyperbolic metric under conformal
mappings,  we obtain  $\rho_K(x,y) =
\rho_{\tilde{K}}(\tilde{x},\tilde{y}),$ where $\tilde{K}$ is the
image of $K$ under the mapping $\phi$. It is obvious that $K \subset
\tilde{K}$ and   $K \neq\tilde{K}$.

Since the hyperbolic metric increases under contraction of domain
(see, e.g., \cite[ch.~VIII]{goluzin}),
we have
$$
\rho_K(\tilde{x},\tilde{y}) >  \rho_{\tilde K}(\tilde{x},\tilde{y}) =  \rho_R(x, y).
$$
Therefore,
$$
\displaystyle\th \frac{\rho_K(\tilde{x},\tilde{y})}{2} \geq  \th
\frac{\rho_K(x, y)}{2},
$$
and from this, taking into account \eqref{sk}, we obtain
$$\displaystyle\frac{\th \frac{\rho_R(\tilde{x},\tilde{y})}{2}}{s_R(\tilde{x},\tilde{y})}>\displaystyle\frac{\th \frac{\rho_R(x,y)}{2}}{s_R(x,y)},
$$
which proves the lemma.
\end{proof}

\section{Proof of Theorem~1}

To prove Theorem~\ref{square}, we need to show that for every points $w_1$, $w_2\in K$ we have
\begin{equation}\label{srho}
{\th\frac{\rho_K(w_1,w_2)}{2}}  \le C(\lambda_0){s_K(w_1,w_2)}\,.
\end{equation}
By Lemma~\ref{t4}, we can assume that
\begin{equation}\label{w12}
w_1=u_1-ia, \ w_2=u_2+ia, \quad \text{where} \quad  |u_1|, |u_2|\le a,
\quad 0<a<1,
\end{equation}
i.e., the points $w_1$ and $w_2$ are on the lower and upper sides of the square centered at the origin with length of side $2a$.
Because of Corollary~\ref{cr1} it is sufficient to consider the case when $u_1$ and $u_2$ satisfy the inequalities
$(1+u_1)(1+u_2)\ge 1-a^2$, $(1-u_1)(1-u_2)\ge 1-a^2$, which can be written in the form
\begin{equation}\label{13d}
|u_1+u_2|\le a^2+u_1u_2.
\end{equation}

The conformal mapping of the unit disc $U=\{|z|<1\}$ onto the square
$K=[-1,1]^2$ has the form
\begin{equation}\label{10d}
f(z)=C^{-1}\int_0^z \frac{d\zeta}{\sqrt{1+\zeta^4}},
\end{equation}
where
$$
C=\int_0^1 \frac{dt}{\sqrt{1+t^4}}=\frac{C(\lambda_0)}{2}=0.927037...
$$
Then, the preimages $z_1$, $z_2$  of the points $w_1$, $w_2$ under the mapping $f$ are in the disc $|z|\le r=r(a),$ where $re^{i\pi/4}$ is the preimage of the vertex $a\sqrt{2}e^{i\pi/4}$ of the square. Consequently, $a$ and $r=r(a)$ are connected via relation
\begin{equation}\label{ar}
\sqrt{2} Ca=B(r), \quad \text{where}\quad B(r)=\int_0^{r} \frac{dt}{\sqrt{1-t^4}}.
\end{equation}

\begin{lemma} \label{t6}
Let $w_1$ and $w_2$ have the form \eqref{w12}, $u_1$ and $u_2$
satisfy \eqref{13d} and $r=r(a)$ be defined by \eqref{ar}. Then,
\begin{equation}\label{frac}
\frac{\th\frac{\rho_K(w_1,w_2)}{2}}{s_K(w_1,w_2)}\le 2C F(r),
\end{equation}
where
$$
F(r)=\frac{\sqrt{2}r}{(1+r^2)B(r)}\,.
$$
\end{lemma}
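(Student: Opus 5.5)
The plan is to bound the numerator and the denominator of the left-hand side of \eqref{frac} \emph{separately}, and then observe that the product of the two bounds is exactly $2CF(r)$. First I rewrite the right-hand side. From \eqref{ar} we have $B(r)=\sqrt{2}\,Ca$, so
$$
2CF(r)=\frac{2\sqrt{2}\,C\,r}{(1+r^2)\sqrt{2}\,Ca}=\frac{1}{a}\cdot\frac{2r}{1+r^2}.
$$
It therefore suffices to prove the two inequalities
$$
s_K(w_1,w_2)\ge a, \qquad \th\frac{\rho_K(w_1,w_2)}{2}\le \frac{2r}{1+r^2}.
$$

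\textbf{Lower bound for $s_K$.} Condition \eqref{13d} is equivalent to the two inequalities $(1+u_1)(1+u_2)\ge 1-a^2$ and $(1-u_1)(1-u_2)\ge 1-a^2$. By Corollary~\ref{cr1} and the description of the regions $G_j$, these say that $w_2$ lies on the segment $\Sigma_5^{w_1}$ between $p$ and $q$. (If $w_1$ is not in the triangle $AOD$, I first apply the symmetry of the square, which also swaps the roles of $u_1,u_2$ appropriately.) Since $\Im w_2=-\Im w_1$, formula \eqref{5d} of Lemma~\ref{l1} applies, equivalently \eqref{7d}. With $\overline{w_2}=u_2-ia$ it gives
$$
s_K(w_1,w_2)=\frac{|w_1-w_2|}{|u_1-u_2+2i|}=\sqrt{\frac{(u_1-u_2)^2+4a^2}{(u_1-u_2)^2+4}}.
$$
Because $0<a<1$, the function $d\mapsto (d^2+4a^2)/(d^2+4)$ is nondecreasing in $d^2$. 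Hence it is at least its value at $d=0$, which is $a^2$, so $s_K(w_1,w_2)\ge a$.

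\textbf{Upper bound for $\th(\rho/2)$.} By conformal invariance, if $z_j=f^{-1}(w_j)$ with $f$ as in \eqref{10d}, then
$$
\th\frac{\rho_K(w_1,w_2)}{2}=\left|\frac{z_1-z_2}{1-\overline{z_1}z_2}\right|.
$$
I will use the fact, stated before the lemma, that $f^{-1}$ maps the closed square $[-a,a]^2$ into the closed disc $|z|\le r$. If this needs justification, I would argue as follows. By the symmetries of $f$, namely $f(iz)=if(z)$ and $f(\bar z)=\overline{f(z)}$, the image $f(\{|z|\le r\})$ is a convex symmetric region whose extreme points in the diagonal directions are the vertices $\pm a\pm ia$. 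One then checks that this image contains the square. I expect this to be the main point requiring care: one must verify that the image of the circle $|z|=r$ bulges outside the square $[-a,a]^2$ along the side midpoints rather than inward. I would try to show that $|f(re^{i\theta})|$ in the real and imaginary directions exceeds $a$, or use the convexity of $f(\{|z|<r\})$, since $f$ is convex univalent.

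Granting that inclusion, we have $|z_1|,|z_2|\le r$. The pseudo-hyperbolic distance $|z_1-z_2|/|1-\overline{z_1}z_2|$ on the closed disc $|z|\le r$ is maximized at diametrically opposite boundary points $z_2=-z_1$ with $|z_1|=r$. To see this, first apply a rotation so that $z_1=\rho\ge0$. Then note that $\{z: |z-\rho|\le t\,|1-\rho z|\}$ is a disc whose boundary, for the critical $t$, passes through $-r$. Alternatively, use the triangle inequality for $\rho_K$ through the center $0$, which gives $\rho(z_1,z_2)\le 2\rho(0,r)$. The maximal value is $2r/(1+r^2)$. Multiplying this bound by $1/s_K\le 1/a$ yields \eqref{frac}.
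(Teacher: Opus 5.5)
Your proof is correct and follows the paper's argument. You rewrite $2CF(r)=\frac{2r}{(1+r^2)a}$ via \eqref{ar}, bound $\th(\rho_K/2)$ by the hyperbolic diameter $2r/(1+r^2)$ of the disc $|z|\le r$, and obtain $s_K(w_1,w_2)\ge a$ from the explicit formula \eqref{sK}. The inclusion $f^{-1}([-a,a]^2)\subset\{|z|\le r\}$, which you flag as delicate and the paper simply asserts, is settled by your convexity remark: by Study's theorem $f(\{|z|\le r\})$ is convex and contains the four vertices $\pm a\pm ia$ (the images of $re^{i\pi/4+ik\pi/2}$), hence their convex hull $[-a,a]^2$.
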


\begin{proof} By the hyperbolic metric principle (see, e.g., \cite[ch.~VIII]{goluzin}) we have
$\rho_K(w_1,w_2)=\rho_U(z_1,z_2),$ where $|z_k|\le r$, $k=1$, $2$.
The non-Euclidean diameter of the disc $\{|z|\le r\}$  equals the
distance between the diametrically opposite points $r$ and $-r$,
therefore,
$$
\th\frac{\rho_K(w_1,w_2)}{2}\le\th\frac{\rho_U(-r,r)}{2}=\frac{2r}{1+r^2}\,.
$$
The $s$-metric between $w_1$ and  $w_2$  in the square $K$ satisfies
\begin{equation}\label{sK}
s_K(w_1,w_2)=\sqrt{\frac{(u_1-u_2)^2+4a^2}{(u_1-u_2)^2+4}}\ge a.
\end{equation}
These inequalities imply \eqref{frac}.
\end{proof}

Let $r_0=0.625623$ and
\begin{equation}\label{11d}
a_0=\frac{B(r_0)}{\sqrt{2}C}=0.485087...
\end{equation}

\begin{cor} \label{t7}
Let $w_1$  $w_2$ satisfy the conditions of Lemma~\ref{t6}, where $a\ge a_0$. Then,  inequality \eqref{srho} holds.
\end{cor}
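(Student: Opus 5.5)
By Lemma~\ref{t6}, for $w_1,w_2$ satisfying its hypotheses we already have
$$
\frac{\th\frac{\rho_K(w_1,w_2)}{2}}{s_K(w_1,w_2)}\le 2C\,F(r),\qquad r=r(a).
$$
Since $2C=C(\lambda_0)$, inequality \eqref{srho} follows once we show $F(r(a))\le 1$ for all $a\in[a_0,1)$. The relation \eqref{ar} makes $a\mapsto r(a)$ strictly increasing, and $r(a_0)=r_0$ by \eqref{11d}. So the task reduces to a one-variable statement:
$$
F(r)=\frac{\sqrt2\,r}{(1+r^2)B(r)}\le 1 \qquad\text{for } r\in[r_0,1).
$$

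\textbf{Step 1: monotonicity of $F$.} I would show $F$ is decreasing on $(0,1)$. Write
$$
\frac{1}{F(r)}=\frac{(1+r^2)B(r)}{\sqrt2\,r}=\frac{1}{\sqrt2}\Bigl(\frac1r+r\Bigr)B(r),
$$
and expand in a power series:
$$
B(r)=\sum_{n\ge0}\binom{2n}{n}\frac{r^{4n+1}}{4^n(4n+1)},\qquad
\frac{B(r)}{r}=1+\frac{r^4}{10}+\frac{3r^8}{72}+\cdots
$$
Hence $(1+r^2)B(r)/r=(1+r^2)\bigl(1+r^4/10+\cdots\bigr)$ is a product of two power series with nonnegative coefficients. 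It is therefore strictly increasing on $(0,1)$, so $F$ is strictly decreasing. If the series argument feels insufficient, one can check instead that the logarithmic derivative is negative:
$$
\frac{1}{r}-\frac{2r}{1+r^2}-\frac{1}{B(r)\sqrt{1-r^4}}<0 .
$$
This is equivalent to $B(r)(1-r^2)<r\sqrt{1-r^4}$, i.e. $B(r)\sqrt{1-r^2}<r\sqrt{1+r^2}$. It can be verified by comparing derivatives, since both sides vanish at $0$.

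\textbf{Step 2: the value at $r_0$.} With monotonicity in hand, it suffices to check $F(r_0)\le1$. That is,
$$
\sqrt2\,r_0\le(1+r_0^2)B(r_0).
$$
Presumably $r_0=0.625623$ was chosen as (a rounding from the safe side of) the root of $F(r)=1$. The check is then a numerical evaluation of $B(r_0)$, for example via the rapidly convergent series above with an explicit tail bound. Here $r_0^4\approx0.153$, so the tail is geometric with ratio below $0.16$. Monotonicity then gives $F(r)\le F(r_0)\le1$ for all $r\ge r_0$, i.e. for all $a\ge a_0$.

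\textbf{Main obstacle.} The only delicate point is Step 2. Because $r_0$ is given to six digits, $F(r_0)$ is extremely close to $1$, so the numerical estimate of $B(r_0)$ must come with a rigorous error bound rather than a floating-point value. I would truncate the series after four or five terms. I would then bound the remainder by the first omitted term divided by $(1-r_0^4)$, and confirm that the resulting inequality still holds with margin.
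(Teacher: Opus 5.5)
Your proof is the paper's argument: $F$ is monotone on $(0,1)$ and $F(r_0)<1$, so $F(r(a))\le F(r_0)<1$ for $a\ge a_0$, and Lemma~\ref{t6} then gives the constant $2C=C(\lambda_0)$. Note that $F$ must be \emph{decreasing} (the paper's ``strictly increases'' is a misprint), and your observation that $(1+r^2)B(r)/r$ is a power series with positive coefficients proves this cleanly. For the numerical step, $F(r_0)\approx 1-9\cdot 10^{-7}$, and you only need a \emph{lower} bound on $B(r_0)$, so the positive partial sums of $B(r_0)/r_0$ suffice with no tail estimate; however, you must keep the terms through $r_0^{20}$ (six terms), because the five-term sum $\approx 1.0163929$ is just below the threshold $\sqrt2/(1+r_0^2)\approx 1.0163931$, while the six-term sum $\approx 1.0163939$ exceeds it.
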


\begin{proof}
It is easy to verify that the function $F$ strictly increases on $(0,1)$ and for $r=r_0=0.625623$ we have $F(r_0)<1$.
Then, for $r>r_0$ the inequality $F(r)<1$ holds and, therefore, for $a\ge a_0$
\begin{equation}\label{2C}
\frac{\th\frac{\rho(w_1,w_2)}{2}}{s(w_1,w_2)}
\le {2}{C}\,,
\end{equation}
what follows \eqref{srho}.
\end{proof}

Because of Corollary~\ref{t7} it remains to prove \eqref{2C} for points satisfying \eqref{w12} and \eqref{13d},
for $0<a<a_0$.

\begin{lemma} \label{t8}
Let
$w_1$ and $w_2$ satisfy \eqref{w12} and \eqref{13d}, $a<a_0$ and $r=r(a)$. Then,
$$
\th\frac{\rho(w_1,w_2)}{2}
\le C\frac{(1+A(r))|w_1-w_2|}{ |1-C^2w_1\overline{w_2}|-2A(r)B^2(r)-A^2(r)B^2(r)},
$$
where
$
A(r)=\sqrt{1+r^4}-1.
$

\end{lemma}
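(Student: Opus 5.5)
We work with the inverse map $g=f^{-1}$, where $f$ is given by \eqref{10d}. Then $z_k=g(w_k)$ satisfies $|z_k|\le r$, and by conformal invariance of the hyperbolic metric
$$
\th\frac{\rho_K(w_1,w_2)}{2}=\th\frac{\rho_U(z_1,z_2)}{2}=\frac{|z_1-z_2|}{|1-z_1\overline{z_2}|}.
$$
The plan is to bound the numerator from above and the denominator from below, comparing $z_k$ with the linearized values $Cw_k$. Since $f'(0)=C^{-1}$, we have $g(w)=Cw+O(w^5)$; the symmetry of the square kills the $w^2,w^3,w^4$ terms.

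\emph{Numerator.} We write
$$
z_1-z_2=\int_{w_1}^{w_2} g'(w)\,dw ,\qquad g'(w)=C\sqrt{1+g(w)^4}.
$$
We integrate along the segment $[w_1,w_2]$, which lies in the square of half-side $a$. Its preimage under $f$ lies in $|z|\le r$, because the image of $\{|z|\le r\}$ contains that square. On this set
$$
|\sqrt{1+z^4}|\le 1+\bigl(\sqrt{1+r^4}-1\bigr)=1+A(r).
$$
This uses the fact that $\sqrt{1+\zeta}-1$ has nonnegative alternating-sign Taylor coefficients whose moduli are dominated by those at $\zeta=|\zeta|$; more simply, $|\sqrt{1+\zeta}-1|\le\sqrt{1+|\zeta|}-1$ by the majorant series. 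Hence
$$
|z_1-z_2|\le C(1+A(r))|w_1-w_2|,
$$
which gives the numerator of the claimed bound.

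\emph{Denominator.} We write $z_k=Cw_k+\delta_k$, where
$$
\delta_k=\int_0^{w_k}\bigl(g'(w)-C\bigr)\,dw .
$$
The same majorant gives $|g'(w)-C|\le CA(r)$ on the relevant region. For the integration length we use $|w_k|$, or better, bound $|\delta_k|$ through the preimage side: $\delta_k=z_k-Cf(z_k)$ with
$$
Cf(z)=\int_0^z\frac{d\zeta}{\sqrt{1+\zeta^4}},
$$
so $|\delta_k|\le\bigl(1-1/\sqrt{1+r^4}\bigr)r$-type bounds in terms of $B(r)$. Then
$$
|1-z_1\overline{z_2}|\ge |1-C^2w_1\overline{w_2}|-C|w_1||\delta_2|-C|w_2||\delta_1|-|\delta_1||\delta_2|.
$$
The goal is to show $C|w_k|\le B(r)$ and $|\delta_k|\le A(r)B(r)$. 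The first follows from $|Cw_k|\le C\sqrt2\,a=B(r)$ by \eqref{ar}, since $|u_k|\le a$. The second is the crux. We estimate
$$
|\delta_k|\le\int_0^{|w_k|}CA(r)\,|dw|\le A(r)\,C\sqrt2\,a=A(r)B(r),
$$
integrating along the ray from $0$ to $w_k$, whose preimage stays in $|z|\le r$. Substituting these gives the subtracted terms $2A(r)B^2(r)+A^2(r)B^2(r)$.

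\emph{Main obstacle.} The step I expect to be delicate is justifying the uniform bound $|g'(w)/C-1|\le A(r)$, i.e. $|\sqrt{1+z^4}-1|\le\sqrt{1+r^4}-1$ for $|z|\le r$, together with the fact that both the segment $[w_1,w_2]$ and the rays $[0,w_k]$ map into $\{|z|\le r\}$. For the first, I would use the majorant series: all Taylor coefficients of $\sqrt{1+\zeta}-1$ have modulus equal to those of $1-\sqrt{1-\zeta}$, whose coefficients are all positive. That gives $|\sqrt{1+\zeta}-1|\le 1-\sqrt{1-|\zeta|}$, which is slightly weaker than needed. So I would instead try the direct estimate $|\sqrt{1+\zeta}-1|=|\zeta|/|\sqrt{1+\zeta}+1|$ with $\Re\sqrt{1+\zeta}\ge\sqrt{1-|\zeta|}$. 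If that still falls short of $\sqrt{1+r^4}-1$, I would accept the corresponding modified constant, since only the smallness for $a<a_0$ matters later. For the second, convexity of the small square together with the fact that $f$ maps $\{|z|\le r\}$ onto a convex set containing it settles the inclusion. Finally, we divide the numerator bound by the denominator bound, which is positive for $a<a_0$, to obtain the stated inequality.
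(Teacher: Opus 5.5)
\textbf{There is a genuine gap.} Your argument is the paper's argument step for step: write $z_k=Cw_k+\delta_k$, bound $|g'(w)-C|\le CA(r)$ pointwise, integrate along $[0,w_k]$, use $C|w_k|\le B(r)$, and apply the triangle inequality. But the pointwise bound behind the denominator estimate is false, and your justification of it does not work.

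The claim $|\sqrt{1+\zeta}-1|\le\sqrt{1+|\zeta|}-1$ fails at $\zeta=-s$. Indeed, $1-\sqrt{1-s}>\sqrt{1+s}-1$ for $0<s<1$, because $\sqrt{1-s}+\sqrt{1+s}<2$. As you noticed, the majorant series and the estimate $\Re\sqrt{1+\zeta}\ge\sqrt{1-|\zeta|}$ both give only $|\sqrt{1+\zeta}-1|\le 1-\sqrt{1-|\zeta|}$. That bound is attained, so no pointwise argument can produce $A(r)$.

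The bad points lie on your integration paths. The segment $[0,a(1+i)]$ is the image of the radius $[0,re^{i\pi/4}]$, and $w_1=-a(1+i)$, $w_2=a(1+i)$ satisfy \eqref{w12} and \eqref{13d} (with equality). On that radius $z^4=-|z|^4$, so $|\sqrt{1+z^4}-1|=1-\sqrt{1-|z|^4}$, which exceeds $A(r)$ once $|z|$ is close to $r$. At $r=r_0$ the values are about $0.0798$ versus $A(r_0)\approx 0.0739$.

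Your fallback, replacing $A(r)$ by $1-\sqrt{1-r^4}$, proves a different inequality, not the stated one. The numerator bound $|g'|\le C(1+A(r))$ is correct, but for the trivial reason $|1+z^4|\le 1+|z|^4$, not because of the majorant claim.

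The paper's proof asserts the same inequality $|g'(w)-C|\le CA(r)$ without justification, so it has the same defect. The lemma itself is nevertheless true. The repair is the $z$-plane estimate you sketched, carried out with the correct extremal direction. Since $\delta_k=z_k-Cf(z_k)$, integrating along the radius $[0,z_k]\subset\{|\zeta|\le r\}$ and using the majorant series of $(1+\zeta)^{-1/2}$, namely $|1-(1+\zeta^4)^{-1/2}|\le (1-|\zeta|^4)^{-1/2}-1$, gives
\[
|\delta_k|=\left|\int_0^{z_k}\Bigl(1-\frac{1}{\sqrt{1+\zeta^4}}\Bigr)d\zeta\right|\le\int_0^r\Bigl(\frac{1}{\sqrt{1-t^4}}-1\Bigr)dt=B(r)-r .
\]
Your ``$(1-1/\sqrt{1+r^4})\,r$-type'' bound again uses the wrong direction, $\zeta^4>0$ instead of $\zeta^4<0$.

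It remains to check $B(r)-r\le A(r)B(r)$ for $r\le r_0$:
\begin{itemize}
\item From $(1-t^4)^{-1/2}-1=t^4/(\sqrt{1-t^4}(1+\sqrt{1-t^4}))$ you get $B(r)-r\le r^5/(5\sqrt{1-r^4}(1+\sqrt{1-r^4}))$.
\item Since $B(r)\ge r$, you have $A(r)B(r)\ge rA(r)=r^5/(1+\sqrt{1+r^4})$.
\item The needed comparison $1+\sqrt{1+r^4}\le 5\sqrt{1-r^4}(1+\sqrt{1-r^4})$ holds comfortably for $r^4\le r_0^4\approx 0.153$. At $r_0$ the two quantities are about $0.010$ versus $0.047$.
\end{itemize}
Combined with $C|w_k|\le B(r)$ (your direct derivation via \eqref{ar} is fine), the triangle inequality then gives exactly the denominator in the statement.
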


\begin{proof}
Since the hyperbolic metric is invariant under conformal mappings, we have
$$
\th\frac{\rho_K(w_1,w_2)}{2}=\th\frac{\rho_U(z_1,z_2)}{2}=\frac{|z_1-z_2|}{|1-z_1\overline{z_2}|},
$$
where  $z_1$ and $z_2$ are the preimages of $w_1$ and $w_2$ under the mapping $f$. Denote $g=f^{-1}$. Then,
$$
|z_1-z_2|=\left|\int_{w_1}^{w_2}g'(w)dw\right|.
$$
In addition, $g'(w)=C \sqrt{1+z^4}$, where $z=g(w)$, and
$$
|g'(w)-C|=C |\sqrt{1+z^4}-1|
\le CA(r).
$$
Then,
$$
|g'(w)|\le C (1+A(r)).
$$
Thus,
$$
|z_1-z_2|\le C (1+A(r))|w_1-w_2|.
$$
Now we will estimate  the value $|1-z_1\overline{z_2}|$ from below.
We have
$$
|z_k-Cw_k|=\left| \int_0^{w_k}(g'(w)-C)dw\right|\le C A(r)|w_k|\le A(r)B(r).
$$

Since, by \eqref{10d},
$$
|w_k|=|f(z_k)|=C^{-1}\left|\int_0^{z_k}\frac{d\zeta}{\sqrt{1+\zeta^4}}\right|\le C^{-1}B(|z_k|)\le C^{-1}B(r),
$$
with the help of the triangle inequality we obtain
\begin{multline*}
|1-z_1\overline{z_2}|=|1-[(z_1-Cw_1)+Cw_1][(\overline{z_2}-C\overline{w_2})+C\overline{w_2}]|
\\
\ge |1-C^2w_1\overline{w_2}|-2A(r)B^2(r)-A^2(r)B^2(r).
\end{multline*}
Therefore,
$$
\frac{|z_1-z_2|}{|1-z_1\overline{z_2}|}\le C\frac{(1+A(r))|w_1-w_2|}{ |1-C^2w_1\overline{w_2}|-2A(r)B^2(r)-A^2(r)B^2(r)},
$$
what follow the statement of the lemma.
\end{proof}

\begin{remark}\label{rem1}
Nonnegativity of the expression $|1-C^2w_1\overline{w_2}|-2A(r)B^2(r)-A^2(r)B^2(r)$
 in the denominator of the fraction above for $a<a_0$ follows from the estimates given below.
\end{remark}

From the estimate obtained in the lemma, with account of the
explicit form of  the $s$-metric (see \eqref{sK}), if follows that
for the proof of \eqref{2C} for $a<a_0$ it is sufficient to show
that \begin{multline} \label{12d} (1+A(r))\sqrt{1+(u_1-u_2)^2/4}\le
\\
\sqrt{(1+C^2(a^2-u_1u_2))^2+C^4a^2(u_1+u_2)^2}-2A(r)B^2(r)-A^2(r)B^2(r),
\end{multline}
where $u_1$ and $u_2$ satisfy \eqref{13d}, and the values $a$ and $r$ are connected by \eqref{ar}.

Denote $\Phi(r)=A(r)+2A(r)B^2(r)+A^2(r)B^2(r)$.

\begin{lemma} \label{t9}
Assume that $u_1$ and $u_2$ satisfy \eqref{13d} for some $a<a_0$, and for some $\alpha$, $\beta>0$ the inequalities
\begin{equation}\label{abc}
(u_1-u_2)^2\le \alpha (a^2-u_1u_2), \quad a^2-u_1u_2\ge \beta a^2
\end{equation}
hold. If
\begin{equation}\label{16d}
\Phi(r)\le(C^2-\alpha/8(1+A(r_0)))(a^2-u_1u_2), \quad \text{where}\quad r=r(a),
\end{equation}
then \eqref{12d} is valid.
\end{lemma}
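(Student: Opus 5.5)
The plan is to bound both sides of \eqref{12d} by expressions that are affine in the single quantity $X=a^2-u_1u_2$; the hypothesis \eqref{16d} will then be exactly the inequality that compares them. First I note that $X\ge 0$: by \eqref{13d}, $X\ge |u_1+u_2|\ge 0$ after adding $a^2$ to both sides in the form $a^2-u_1u_2\ge\ldots$. More precisely, \eqref{13d} gives $a^2+u_1u_2\ge|u_1+u_2|$, and since $|u_1|,|u_2|\le a$ we have $|u_1u_2|\le a^2$, so $X=a^2-u_1u_2\ge 0$. The second hypothesis in \eqref{abc} also supplies $X\ge\beta a^2>0$ directly.

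For the right-hand side of \eqref{12d}, I drop the nonnegative term $C^4a^2(u_1+u_2)^2$ under the square root. Since $1+C^2X\ge 1>0$, this gives
$$
\sqrt{(1+C^2X)^2+C^4a^2(u_1+u_2)^2}\;\ge\;1+C^2X .
$$
Hence the right-hand side is at least $1+C^2X-2A(r)B^2(r)-A^2(r)B^2(r)$.

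For the left-hand side, I use the elementary bound $\sqrt{1+s}\le 1+s/2$ for $s\ge0$, followed by the first inequality of \eqref{abc}:
$$
(1+A(r))\sqrt{1+(u_1-u_2)^2/4}\le (1+A(r))\Bigl(1+\frac{(u_1-u_2)^2}{8}\Bigr)\le 1+A(r)+\frac{\alpha(1+A(r))}{8}\,X .
$$
Next I need to replace $A(r)$ in the coefficient of $X$ by $A(r_0)$. Since $a<a_0$ and $B$ is increasing, relations \eqref{ar} and \eqref{11d} give $r=r(a)<r_0$. Because $A(r)=\sqrt{1+r^4}-1$ is increasing, $A(r)\le A(r_0)$. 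Therefore the left-hand side is at most $1+A(r)+\alpha(1+A(r_0))X/8$.

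Combining the two bounds, \eqref{12d} will follow once
$$
1+A(r)+\frac{\alpha(1+A(r_0))}{8}X\le 1+C^2X-2A(r)B^2(r)-A^2(r)B^2(r).
$$
Rearranging, this is $\Phi(r)\le \bigl(C^2-\alpha(1+A(r_0))/8\bigr)X$, which is precisely \eqref{16d}, read with the factor $(1+A(r_0))$ in the numerator.

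The argument is essentially routine. The only real obstacle is bookkeeping: one has to check that $X\ge0$, so that the multiplication by $\alpha$ and the square-root estimate run in the right direction, and that the monotonicity of $A$ together with $r<r_0$ justifies the constant $A(r_0)$. The parameter $\beta$ is not needed for this lemma itself. I expect it to be used later, when \eqref{16d} is verified, since it gives $X\ge\beta a^2$ and $\Phi(r)=O(r^4)$ while $a\asymp r$.
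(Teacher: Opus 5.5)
Your argument is correct and is essentially the paper's proof: drop $C^4a^2(u_1+u_2)^2$ under the root, use $\sqrt{1+s}\le 1+s/2$ together with $(u_1-u_2)^2\le\alpha(a^2-u_1u_2)$ and $A(r)\le A(r_0)$ (since $r<r_0$), and rearrange into \eqref{16d}; reading $(1+A(r_0))$ in the numerator is right, and it matches the paper's numerical values of $\gamma$. One small slip: your opening claim $a^2-u_1u_2\ge|u_1+u_2|$ does not follow from \eqref{13d}, which bounds $a^2+u_1u_2$, but the justification you then give via $|u_1u_2|\le a^2$ is valid and is all that is needed.
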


\begin{proof} From \eqref{16d} with the help of \eqref{abc} and monotonicity of the function $A(r)$
it follows that
\begin{equation*}\label{15d}
(1+A(r))(u_1-u_2)^2/8+\Phi(r)\le C^2(a^2-u_1u_2),
\end{equation*}
which can be written in the form
\begin{equation*}
(1+A(r))(1+(u_1-u_2)^2/8)+2A(r)B^2(r)+A^2(r)B^2(r)\le 1+C^2(a^2-u_1u_2).
\end{equation*}
From the latter inequality taking into account that   $\sqrt{1+(u_1-u_2)^2/4}\le 1+(u_1-u_2)^2/8$ we obtain \eqref{12d}.
\end{proof}

\begin{lemma} \label{t10}
Assume that $u_1$ and $u_2$ satisfy \eqref{13d} for some $a<a_0$ and for some $\alpha$, $\beta>0$ inequalities \eqref{abc} hold.
If
\begin{equation}\label{19d}
\frac{A(r)}{B^2(r)}+2A(r)+A^2(r)\le \gamma,
\end{equation}
where $r=r(a)$ and
\begin{equation*}
\gamma=\frac{\beta(C^2-\alpha/8(1+A(r_0)))}{2C^2},
\end{equation*}
then  \eqref{16d} is valid.
\end{lemma}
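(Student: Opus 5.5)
The plan is to factor $B^2(r)$ out of $\Phi(r)$ and then use the relation \eqref{ar} between $a$ and $r$. The hypotheses \eqref{abc} will enter only through the lower bound $a^2-u_1u_2\ge\beta a^2$.

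Write $K_0=C^2-\alpha/8(1+A(r_0))$ for the constant in \eqref{16d}, so that $\gamma=\beta K_0/(2C^2)$.

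First, I note that $K_0>0$ may be assumed. For $0<r<1$ we have $A(r)=\sqrt{1+r^4}-1>0$ and $B(r)>0$, so the left-hand side of \eqref{19d} is strictly positive. Hence \eqref{19d} forces $\gamma>0$, and since $\beta>0$ this gives $K_0>0$.

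Next, I factor $\Phi$:
$$
\Phi(r)=A(r)+2A(r)B^2(r)+A^2(r)B^2(r)=B^2(r)\Bigl(\frac{A(r)}{B^2(r)}+2A(r)+A^2(r)\Bigr)\le \gamma B^2(r),
$$
where the inequality is exactly \eqref{19d}.

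Then I use \eqref{ar}, which gives $B(r)=\sqrt2\,Ca$, i.e. $B^2(r)=2C^2a^2$. Therefore
$$
\Phi(r)\le \gamma\cdot 2C^2a^2=\beta K_0 a^2 .
$$
By the second inequality in \eqref{abc}, $\beta a^2\le a^2-u_1u_2$. Multiplying this by $K_0>0$ yields
$$
\Phi(r)\le K_0(a^2-u_1u_2),
$$
which is \eqref{16d}.

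The argument is almost purely algebraic. The one point to watch is the sign of $K_0$, because we multiply an inequality by it; this is settled by the positivity observation above. The value of $\gamma$ is evidently chosen so that the factor $2C^2$ coming from $B^2=2C^2a^2$ cancels exactly. The first inequality of \eqref{abc}, the restriction $a<a_0$, and condition \eqref{13d} are not needed in this step; they are used in Lemma~\ref{t9}.
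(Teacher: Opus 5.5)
Your proposal is correct and is essentially the paper's proof: you factor out $B^2(r)$, use \eqref{19d} to get $\Phi(r)\le\gamma B^2(r)$, substitute $B^2(r)=2C^2a^2$ from \eqref{ar}, and finish with $a^2-u_1u_2\ge\beta a^2$. Your explicit check that $C^2-\alpha/8(1+A(r_0))>0$ (forced by \eqref{19d}, since its left-hand side is positive) is a small point the paper leaves implicit.
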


\begin{proof}
From \eqref{19d}, taking into account \eqref{ar}, we obtain
\begin{equation*}
\Phi(r)\le\gamma B(r)^2=2\gamma C^2 a^2=\beta(C^2-\alpha/8(1+A(r_0)))a^2 ,
\end{equation*}
which, with the help of the second inequality from  \eqref{abc},
follows \eqref{16d}.
\end{proof}

Therefore, it remains to show that for $r<r_0$ inequality
\eqref{19d} holds. It is easy to show that the expression in the
left-hand side of \eqref{19d} strictly increases as a function of
$r$ on the segment $[0,r_0]$, and
\begin{equation*}
\frac{A(r_0)}{B^2(r_0)}+2A(r_0)+A^2(r_0)=
0.314881.
\end{equation*}
Therefore, for the proof of Theorem~\ref{square} we only have to establish that
\begin{equation}\label{20d}
\gamma>0.314881.
\end{equation}

To estimate $\gamma$, we will prove the following auxiliary result.

\begin{lemma}\label{l5}
Let $0<a\le a_0$, where $a_0$ is defined in \eqref{11d}, and $u_1$,
$u_2$ from the segment $[-a,a]$ satisfy \eqref{13d}.

$1)$ If $u_1u_2\ge 0$, then
\begin{equation}\label{21d}
(u_1-u_2)^2\le
0.235309(a^2-u_1u_2),
\end{equation}
\begin{equation}\label{22d}
a^2-u_1u_2\ge
0.933029 a^2.
\end{equation}

$2)$ If $u_1u_2< 0$, then
\begin{equation}\label{23d}
(u_1-u_2)^2\le 2(a^2-u_1u_2), \quad a^2-u_1u_2\ge a^2.
\end{equation}
\end{lemma}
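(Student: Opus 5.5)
In case $2)$ both inequalities follow from elementary algebra. In case $1)$ I would rescale, reduce to the boundary curve of \eqref{13d}, and then do a one-variable computation at $a=a_0$.

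\textbf{Case $2)$.} Suppose $u_1u_2<0$. Then $a^2-u_1u_2> a^2$ at once. Since $|u_k|\le a$,
$$(u_1-u_2)^2=u_1^2+u_2^2-2u_1u_2\le 2a^2-2u_1u_2=2(a^2-u_1u_2).$$
So \eqref{23d} needs nothing beyond $|u_k|\le a$.

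\textbf{Case $1)$: reductions.} Suppose $u_1u_2\ge 0$. The change $(u_1,u_2)\mapsto(-u_1,-u_2)$ preserves \eqref{13d} and both quantities in \eqref{21d}, \eqref{22d}. So I assume $u_1,u_2\ge 0$, and then \eqref{13d} reads $u_1+u_2\le a^2+u_1u_2$. Put $u_k=av_k$ with $v_k\in[0,1]$. The constraint becomes
$$v_1+v_2\le a(1+v_1v_2),$$
and the claims become $(v_1-v_2)^2\le 0.235309\,(1-v_1v_2)$ and $1-v_1v_2\ge 0.933029$. The admissible set grows with $a$, so it suffices to take $a=a_0$.

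\textbf{Case $1)$: inequality \eqref{22d}.} Write $P=v_1v_2$. By AM--GM and the constraint, $2\sqrt P\le v_1+v_2\le a(1+P)$. Hence $\sqrt P\le (1-\sqrt{1-a^2})/a$, which is the smaller root of $a s^2-2s+a=0$. For $a=a_0$ this gives $P\le 0.06697\ldots$, so $1-v_1v_2\ge 0.933029$.

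\textbf{Case $1)$: inequality \eqref{21d}.} By symmetry, assume $v_1\ge v_2$. For fixed $v_2$, the function $(v_1-v_2)^2/(1-v_1v_2)$ is increasing in $v_1\ge v_2$. Its numerator increases, and its denominator decreases when $v_2>0$; when $v_2=0$ the denominator is constant. So the maximum lies on the boundary curve $v_1(1-av_2)=a-v_2$, i.e.
$$v_1=\frac{a-v_2}{1-av_2},$$
a decreasing Möbius function of $v_2$, with $v_2\in[0,v^*]$ where $v^*$ is the fixed point $v_1=v_2$. At the endpoint $v_2=0$ we get $v_1=a$ and the ratio equals $a^2$. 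At $a=a_0$ this is $a_0^2=0.235309\ldots$, the stated constant.

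\textbf{Main obstacle.} The remaining step is to show that $h(v_2)=(v_1-v_2)^2/(1-v_1v_2)$ along the boundary curve is maximal at $v_2=0$. Here $v_1-v_2$ decreases from $a$ to $0$, but $1-v_1v_2$ also decreases, so monotonicity is not automatic. I would substitute the explicit $v_1$. This gives
$$1-v_1v_2=\frac{1-2av_2+v_2^2}{1-av_2},\qquad v_1-v_2=\frac{a-2v_2+av_2^2}{1-av_2},$$
and hence $h$ is a rational function of $v_2$ alone. I would then check $h(v_2)\le a^2$ by clearing denominators into a polynomial inequality on $[0,v^*]$. If that proves awkward, I would verify it numerically at $a=a_0$ as the paper does for its other constants.
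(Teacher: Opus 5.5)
Your case $2)$ is the same as the paper's. Your proof of \eqref{22d} is equivalent to it: the paper applies AM--GM to $(1-u_1)+(1-u_2)$ under $(1-u_1)(1-u_2)\ge 1-a^2$, and you apply it to $v_1+v_2$. Both give $v_1v_2\le\bigl((1-\sqrt{1-a^2})/a\bigr)^2$. You should add that the other branch $\sqrt P\ge(1+\sqrt{1-a^2})/a$ is excluded because it exceeds $1\ge\sqrt P$.

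The gap is \eqref{21d}, which is the real content of part $1)$. You reduce it to $h(v_2)\le a^2$ along the curve $v_1=(a-v_2)/(1-av_2)$. You then stop, call this the main obstacle, and offer a numerical check as a fallback. A numerical check of a one-variable inequality over a whole interval is not a proof, so as written \eqref{21d} is unproved.

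The missing idea is that no monotonicity or boundary reduction is needed. Square \eqref{13d}:
$$(u_1-u_2)^2=(u_1+u_2)^2-4u_1u_2\le(a^2+u_1u_2)^2-4u_1u_2=(a^2-u_1u_2)^2-4(1-a^2)u_1u_2\le(a^2-u_1u_2)^2,$$
using $u_1u_2\ge0$ and $a<1$. Since $0\le a^2-u_1u_2\le a^2$, this gives $(u_1-u_2)^2\le a^2(a^2-u_1u_2)\le a_0^2(a^2-u_1u_2)$. This is exactly the paper's argument. In your variables it says $|v_1-v_2|\le a(1-v_1v_2)$ on the whole admissible set, and $1-v_1v_2\le 1$ then finishes.

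If you want to keep your route, the same fact closes it. Along your curve,
$$(1-av_2)\bigl[a(1-v_1v_2)-(v_1-v_2)\bigr]=2(1-a^2)v_2\ge0,$$
so $h\le a^2(1-v_1v_2)\le a^2$. Equivalently, your cleared polynomial inequality factors as
$$a^2(1-av)(1-2av+v^2)-(a-2v+av^2)^2=v(a-v)\bigl[a^2(1-2av+v^2)+4(1-a^2)(1-av)\bigr],$$
which is nonnegative for $0\le v\le v^*$, because $v^*=(1-\sqrt{1-a^2})/a\le a$.
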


\begin{proof}
1) Let $u_1u_2\ge 0$.
From \eqref{13d} it follows that
$$
u_1^2+2u_1u_2+ u_2^2=(u_1+u_2)^2\le (a^2+u_1u_2)^2=a^4+2a^2u_1u_2+u_1^2u_2^2,
$$
therefore, taking into account non-negativeness of $u_1u_2$, we have
$$
(u_1-u_2)^2=(u_1)^2+(u_2)^2-2u_1u_2\le a^4-2(2-a^2)u_1u_2+u_1^2u_2^2\le a^4-2a^2u_1u_2+u_1^2u_2^2=
(a^2-u_1u_2)^2.
$$
From this it follows that
\begin{equation*} (u_1-u_2)^2\le
a^2(a^2-u_1u_2)\le  a_0^2(a^2-u_1u_2)=0.235309(a^2-u_1u_2)
\end{equation*}
and  \eqref{21d} is proved.

Now we will estimate the value of $a^2-u_1u_2$ from below. Taking
into account that $(1-u_1)(1-u_2)\ge 1-a^2$, we obtain
$$
u_1u_2=(1-u_1)(1-u_2)-(1-u_1)-(1-u_2)+1\le (1-u_1)(1-u_2)-2\sqrt{(1-u_1)(1-u_2)}+1$$$$=(1-\sqrt{(1-u_1)(1-u_2)})^2\le(1-\sqrt{1-a^2})^2.
$$
Thus, \begin{multline*}
a^2-u_1u_2\ge a^2-(1-\sqrt{1-a^2})^2= a^2-\frac{a^4}{(1+\sqrt{1-a^2})^2}\\
=\left(1-\frac{a^2}{(1+\sqrt{1-a^2})^2}\right)a^2\ge\left(1-\frac{a_0^2}{(1+\sqrt{1-a_0^2})^2}\right)a^2=0.933029 a^2,
\end{multline*}
i.e., \eqref{22d} is proved.

2) Let now $u_1u_2< 0$. Then,
$$
(u_1-u_2)^2=(u_1)^2+(u_2)^2-2u_1u_2\le 2(a^2-u_1u_2),
$$
and the first inequality from \eqref{23d} is proved, and the second one is evident.
\end{proof}

Now we will turn to the proof of \eqref{19d}.

Consider two cases.

1) Let
$u_1u_2\ge 0$. Then, with the help of Lemma~\ref{l5} we have $\alpha=0.235309$, $\beta=0.933029,$ and $\gamma=0.449368$. Therefore, \eqref{20d} holds.

2) Let
$u_1u_2<0$. Then, from Lemma~\ref{l5} we have $\alpha=2$, $\beta=1,$ and $\gamma=0.343805$. Thus, in this case \eqref{20d} is also valid.

\section*{Conflict of interest}
The authors declare no conflict of interest.

\end{document}